\newtheorem{theorem}{Theorem}
\newtheorem{lemma}{Lemma}[section]
\newtheorem{remark}{Remark}
\numberwithin{equation}{section}
\newtheorem{theoremA}{Theorem}
\journal{Journal of Number Theory}
\begin{document}

\begin{frontmatter}

%% Title, authors and addresses

%% use the tnoteref command within \title for footnotes;
%% use the tnotetext command for theassociated footnote;
%% use the fnref command within \author or \affiliation for footnotes;
%% use the fntext command for theassociated footnote;
%% use the corref command within \author for corresponding author footnotes;
%% use the cortext command for theassociated footnote;
%% use the ead command for the email address,
%% and the form \ead[url] for the home page:
%% \title{Title\tnoteref{label1}}
%% \tnotetext[label1]{}
%% \author{Name\corref{cor1}\fnref{label2}}
%% \ead{email address}
%% \ead[url]{home page}
%% \fntext[label2]{}
%% \cortext[cor1]{}
%% \affiliation{organization={},
%%             addressline={},
%%             city={},
%%             postcode={},
%%             state={},
%%             country={}}
%% \fntext[label3]{}

\title{On the distribution of the strongly multiplicative function $2^{\omega(n)}$ on the set of natural numbers}
		
		%% use optional labels to link authors explicitly to addresses:
		 
		\author[label1]{K. Venkatasubbareddy}
		 \author[label2]{A. Sankaranarayanan}
		 	\affiliation[label1]{organization={University of Hyderabad},%Department and Organization
		 			addressline={CR Rao Road, Gachibowli}, 
		 			city={Hyderabad},
		 			postcode={500046}, 
		 			state={Telangana},
		 			country={India}}
		\affiliation[label2]{organization={University of Hyderabad},%Department and Organization
			addressline={CR Rao Road, Gachibowli}, 
			city={Hyderabad},
			postcode={500046}, 
			state={Telangana},
			country={India}}
		
\begin{abstract}
In this paper, we study the distribution of the sequence of integers $2^{\omega(n)}$ under the assumption of the strong Riemann hypothesis, where $\omega(n)$ denotes the number of distinct prime divisors of $n$. We provide an asymptotic formula for the sum $\displaystyle\sum_{n\leq x}2^{\omega(n)}$ under this assumption. We study the sum $\displaystyle\sum_{n\leq x}2^{\omega(n)}$ unconditionally too.
\end{abstract}

%% Keywords
\begin{keyword}
Dirichlet series, Riemann zeta function, Riemann hypothesis, Perron formula.
%% MSC codes here, in the form: \MSC code \sep code
\MSC[2010] 11M06 \sep  11M26.

\end{keyword}

\end{frontmatter}

%% Add \usepackage{lineno} before \begin{document} and uncomment 
%% following line to enable line numbers
%% \linenumbers

%% main text
%%

%% Use \section commands to start a section
\section{Introduction}
An important question in Analytic Number Theory is to estimate the error term $E(x)$ in the asymptotic formula 
\begin{equation*}
    \sum_{n\leq x}d(n)=x\log x+(2\gamma-1)x+E(x),
\end{equation*}
where $d(n)$ is the divisor function, and this is known as the Dirichlet divisor problem. In the middle of 19th century, Dirichlet showed by elementary arguments that $E(x)\ll x^{\frac{1}{2}}$ (see \cite{Ivic2003}). Later, many researchers have improved this estimate and the best-known bound is $E(x)\ll x^{\frac{131}{416}+\varepsilon}$, which is due to Huxley \cite{Huxley}. The omega results for this error term state that the best possible bound that one can establish is:

By Hardy \cite{Ivic2003}, the omega result for the error term $E(x)$ is
 \begin{equation*}
    E(x)=\begin{cases}
        \Omega_+\left((x\log x)^\frac{1}{4}\log \log x\right)\\
        \Omega_-(x^\frac{1}{4}).
         \end{cases}
\end{equation*}
 Conjecturally, we have $E(x)=O\left(x^{\frac{1}{4}+\varepsilon}\right)$. The Hardy omega result was consequently improved by Gangadharan, Corr{\'a}di and K{\'a}tai, Hafner (see \cite{Corradi and Katai, Gangadharan, Hafner}) and for the best known bound, see \cite{Hafner}.

An arithmetical function $g(n)$ is said to be \textit{strongly multiplicative} if $g(p^a)=g(p)^a$ for all primes $p$ and all natural numbers $a$. The arithmetical function $\omega(n)$ is defined as $\omega(1)=0$ and for $n>1$,  $\omega(n)$ denotes the number of distinct prime factors of $n$. We define the arithmetical function $f(n)$ as $f(n)=2^{\omega(n)}$ for all $n\geq 1$. Note that $\omega(n)$ is additive, that is, $\omega(mn)=\omega(m)+\omega(n)$, whenever $(m,n)=1$ and we see that $f(n)$ is not only multiplicative but also strongly multiplicative. It is not difficult to see that the function $\frac{\phi(n)}{n}$ is also strongly multiplicative. 

The aim of this article is to concentrate on the distribution of the function $2^{\omega(n)}$ over natural numbers. One of the main reasons to study $2^{\omega(n)}$ here is that the function $2^{\omega(n)}$ agrees with the divisor function $d(n)$ whenever $n$ is squarefree (we call an integer $n$ squarefree if $p^2\nmid n$ whenever $p|n$). 

We define the $L$-function $F(s)$ attached to the coefficients $f(n)$, namely
\begin{equation}
    F(s)=\sum_{n=1}^\infty \frac{f(n)}{n^s}.\label{E1.1}
\end{equation}
Since $f(n)$ is multiplicative, we can write the Euler product representation of $F(s)$ as 
\begin{equation}
    F(s)=\prod_p(1+2p^{-s}+2p^{-2s}+2p^{-3s}+2p^{-4s}+\cdots)=\frac{\zeta^2(s)}{\zeta(2s)} \label{E1.2}
\end{equation}
for $\Re (s)>1$ and $F(s)$ converges absolutely and uniformly for $\Re (s)>1$.

We have the Riemann zeta function defined by
\begin{equation*}
    \zeta(s)=\sum_{n\geq1}\frac{1}{n^s}=\prod_p\left(1-\frac{1}{p^s}\right)^{-1}
\end{equation*}
for $\Re(s)>1$, which has meromorphic continuation to the whole of complex plane $\mathbb{C}$ by the functional equation
\begin{equation}
    \zeta(s)=\chi(s)\zeta(1-s)\label{E1.3}
\end{equation}
and the conversion factor $\chi(s)$ satisfies
\begin{equation}
    \mid\chi(s)\mid\sim |t|^{\frac{1}{2}-\sigma}\label{E1.4}
\end{equation} 
for $s=\sigma+it$,  $|t|\geq t_0>10$ (see \cite{Titchmarsh}).

The conjecture {\em Riemann hypothesis} states that all nontrivial zeros of the Riemann zeta function $\zeta(s)$ lie on the critical line $\Re(s) = \frac{1}{2}$. The {\em stronger version of the Riemann hypothesis} states that all the nontrivial zeros of $\zeta(s)$ lie on the critical line and each such zero is simple.

%Throughout the paper $\varepsilon$ is any small positive constant.

In the paper \cite{KVAS}, we considered the same arithmetical function $f(n)$ as defined in \eqref{E1.1}, but over the squareful integers and have established the following theorems.
\begin{theoremA}
    For any $\varepsilon>0$, unconditionally, we have
\begin{align*}
    \sum_{\substack{n\leq x\\\text{$n$ is squareful}}} 2^{\omega (n)}=&\mathcal{C}_1 x^\frac{1}{2}\log x+\mathcal{C}_2 x^\frac{1}{2}+\mathcal{C}_3 x^\frac{1}{3}\log x+\mathcal{C}_4 x^\frac{1}{3}\\
    &\qquad+O\left(x^\frac{1}{4}\exp\left\{c\varepsilon\left(\frac{\log x}{\log \log x}\right)^\frac{1}{3}\right\}\right)
\end{align*}
holds for some $c>0$.\label{TA}
\end{theoremA}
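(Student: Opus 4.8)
The plan is to study the Dirichlet series generating the restricted sum and to locate its poles by comparison with powers of $\zeta$. Writing $G(s)=\sum_{n\ \text{squareful}}2^{\omega(n)}n^{-s}$, the multiplicativity of $2^{\omega(n)}$ together with the squareful condition (each prime divides $n$ to exponent $0$ or $\geq 2$, and $2^{\omega(p^k)}=2$ for every $k\geq 1$) yields the Euler product
\begin{equation*}
G(s)=\prod_p\left(1+\frac{2p^{-2s}}{1-p^{-s}}\right)=\prod_p\frac{1-p^{-s}+2p^{-2s}}{1-p^{-s}},
\end{equation*}
valid for $\Re(s)>\tfrac12$. First I would extract the polar part by factoring out suitable zeta powers: a direct manipulation of the local factors gives
\begin{equation*}
G(s)=\zeta^2(2s)\,\zeta^2(3s)\,H(s),\qquad H(s)=\prod_p\bigl(1-p^{-4s}-2p^{-5s}-\cdots\bigr),
\end{equation*}
where $H(s)$ converges absolutely and is bounded for $\Re(s)>\tfrac14$. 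A further step of the same kind shows $H(s)\zeta(4s)$ continues to $\Re(s)>\tfrac15$, so $H$ carries exactly one $\zeta(4s)^{-1}$ worth of boundary behaviour at the line $\Re(s)=\tfrac14$. Consequently the only poles of $G$ with $\Re(s)>\tfrac14$ are the double poles at $s=\tfrac12$ and $s=\tfrac13$ produced by $\zeta^2(2s)$ and $\zeta^2(3s)$; the zeros of $\zeta(4s)$ generate poles of $G$ only in the strip $0<\Re(s)<\tfrac14$.

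Next I would apply the truncated Perron formula on the line $\Re(s)=\tfrac12+\varepsilon$ (the abscissa of absolute convergence of $G$) to express the partial sum as a contour integral of $G(s)x^s/s$ up to height $T$, plus the standard truncation error. Shifting the contour to the left past $s=\tfrac12$ and $s=\tfrac13$ produces the four main terms as residues: the double pole of $\zeta^2(2s)$ against $x^s/s$ contributes $\mathcal{C}_1x^{1/2}\log x+\mathcal{C}_2x^{1/2}$, with constants built from $\zeta^2(3/2)$, $H(1/2)$ and their derivatives, and the double pole at $s=\tfrac13$ contributes $\mathcal{C}_3x^{1/3}\log x+\mathcal{C}_4x^{1/3}$ in the same way. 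This part is routine Laurent-expansion bookkeeping.

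The error comes from the shifted contour, and this is the crux. Since $H$ behaves like $\zeta(4s)^{-1}$ near $\Re(s)=\tfrac14$, I cannot place a vertical line exactly at $\tfrac14$ without meeting the zeros of $\zeta(4s)$; instead I would move the contour onto the curve $\Re(s)=\tfrac14-c/\bigl((\log|t|)^{2/3}(\log\log|t|)^{1/3}\bigr)$ lying inside the Vinogradov–Korobov zero-free region of $\zeta(4s)$, where $1/\zeta(4s)\ll(\log|t|)^{2/3}(\log\log|t|)^{1/3}$. On this curve $|x^s|=x^{1/4}\exp\{-c\log x/((\log|t|)^{2/3}(\log\log|t|)^{1/3})\}$, while the numerator $\zeta^2(2s)\zeta^2(3s)$ is controlled by convexity bounds following from the functional equation \eqref{E1.3}–\eqref{E1.4}. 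Estimating the vertical and horizontal pieces together with the Perron truncation error, and then optimizing $T$ against the width of the zero-free region, should yield the stated bound $x^{1/4}\exp\{c\varepsilon(\log x/\log\log x)^{1/3}\}$.

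The hard part will be precisely this final estimation: combining the polynomial growth of the numerator, the $1/\zeta(4s)$ bound inside the zero-free region, and the truncation error into a single clean estimate, and in particular handling the contribution near the line $\Re(s)=\tfrac14$, where $H$ is only conditionally controlled and where the balancing of $T$ against the zero-free region produces the subexponential factor. By contrast, the factorization of $G$ and the residue computation for the four main terms I expect to be comparatively straightforward.
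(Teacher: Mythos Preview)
The paper does not actually prove Theorem~A; it is quoted from the authors' earlier work \cite{KVAS} (see the sentence immediately preceding the statement). So there is no proof in the present paper to compare your proposal against. That said, your factorization $G(s)=\zeta^2(2s)\,\zeta^2(3s)\,H(s)$ with $H$ absolutely convergent for $\Re(s)>\tfrac14$ and $H(s)\zeta(4s)$ extending to $\Re(s)>\tfrac15$ is correct (the local factor really is $1-p^{-4s}-2p^{-5s}+\cdots$), and the general strategy---Perron, shift the contour, collect residues at $s=\tfrac12$ and $s=\tfrac13$, control the error near the $\tfrac14$-line via the behaviour of $1/\zeta(4s)$---is the natural one and almost certainly what \cite{KVAS} does as well. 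This is also in the same spirit as the proofs of Theorems~\ref{T1} and~\ref{T2} in the present paper.

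One caution about your final step. You say that pushing the vertical line into the Vinogradov--Korobov zero-free region for $\zeta(4s)$ and optimizing ``should yield the stated bound''. Note, however, that the stated error carries a \emph{growing} factor $\exp\{c\varepsilon(\log x/\log\log x)^{1/3}\}$, whereas the zero-free-region manoeuvre you describe ordinarily produces a \emph{saving} of the shape $\exp\{-C(\log x)^{3/5}(\log\log x)^{-1/5}\}$. More seriously, on any line $\Re(s)\approx\tfrac14$ the numerator $\zeta^2(2s)\zeta^2(3s)$ grows like a positive power of $|t|$ (convexity gives roughly $|t|^{3/4+\varepsilon}$), so the vertical integral is not $O(x^{1/4})$ from pointwise bounds alone; one needs either mean-value estimates for $\zeta$ on the half-line or a convolution argument (pull the absolutely convergent factor outside and quote an asymptotic for the coefficients of $\zeta^2(2s)\zeta^2(3s)$). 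This is likely why the error in \cite{KVAS} has its somewhat unusual shape, and you should expect the ``hard part'' you flagged to require more than a routine contour-balancing.
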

\begin{theoremA}
Assuming the strong Riemann hypothesis, namely, all the nontrivial zeros of $\zeta(ls)$ are on the respective critical lines for $l=2,\ 3,\ \cdots,\ 14$ and each such zero is simple, we have 
\begin{align*}
   \sum_{\substack{n\leq x\\\text{$n$ is squareful}}} 2^{\omega (n)}
    =&\mathcal{D}_1x^\frac{1}{2}\log x+\mathcal{D}_2 x^\frac{1}{2}+\mathcal{D}_3x^\frac{1}{3}\log x+\mathcal{D}_4x^\frac{1}{3}\\
    &+\sum_{\substack{\zeta(4\rho)=0,\ \Re (\rho)=\frac{1}{8},\\
0<\left| \Im (4\rho)=\gamma_\frac{1}{8}\right|<x^\frac{3}{11}}}\mathcal{D}_{1,\ \gamma_\frac{1}{8}}x^{\frac{1}{8}+i\gamma_\frac{1}{8}}\nonumber\\
&+\sum_{\substack{\zeta(5\rho)=0,\ \Re (\rho)=\frac{1}{10},\\
0<\left| \Im (5\rho)=\gamma_\frac{1}{10}\right| <x^\frac{3}{11}}}\left(\mathcal{D}_{2,\ \gamma_\frac{1}{10}}x^{\frac{1}{10}+i\gamma_\frac{1}{10}}\log x+\mathcal{D}_{3,\ \gamma_\frac{1}{10}}x^{\frac{1}{10}+i\gamma_\frac{1}{10}}\right)\nonumber\\
&+\sum_{\substack{\zeta(6\rho)=0,\ \Re (\rho)=\frac{1}{12},\\
0<\left| \Im (6\rho)=\gamma_\frac{1}{12}\right| <x^\frac{3}{11}}}\Big(\mathcal{D}_{4,\ \gamma_\frac{1}{12}}x^{\frac{1}{12}+i\gamma_\frac{1}{12}}(\log x)^2\nonumber\\
&\qquad+\mathcal{D}_{5,\ \gamma_\frac{1}{12}}x^{\frac{1}{12}+i\gamma_\frac{1}{12}}\log x+\mathcal{D}_{6,\ \gamma_\frac{1}{12}}x^{\frac{1}{12}+i\gamma_\frac{1}{12}}\Big)+O\left(x^{\frac{5}{22}+\varepsilon}\right),
\end{align*}
where $\mathcal{D}_i$'s are certain real constants and $\mathcal{D}_{i,\gamma}$'s are certain complex constants that can be evaluated explicitly. \label{TB}
\end{theoremA}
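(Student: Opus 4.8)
The plan is to express the restricted summatory function as a contour integral of its Dirichlet series and to move the line of integration leftward across the poles produced by the zeta factors. First I would record the generating series: since every prime in a squareful integer occurs to exponent at least $2$, grouping by primes gives the Euler product
\begin{equation*}
    G(s):=\sum_{\substack{n\geq 1\\ n\ \mathrm{squareful}}}\frac{2^{\omega(n)}}{n^{s}}=\prod_{p}\left(1+2\sum_{a\geq 2}p^{-as}\right)=\prod_{p}\frac{1-p^{-s}+2p^{-2s}}{1-p^{-s}},
\end{equation*}
absolutely convergent for $\Re(s)>\tfrac12$. The decisive step is to factor $G$ into Riemann zeta factors. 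Writing $u=p^{-s}$ and matching Euler factors by a finite power-series computation, one finds
\begin{equation*}
    G(s)=\zeta(2s)^{2}\,\zeta(3s)^{2}\,\zeta(4s)^{-1}\,\zeta(5s)^{-2}\,\zeta(6s)^{-3}\prod_{l=7}^{13}\zeta(ls)^{a_{l}}\,U(s),
\end{equation*}
with explicit integer exponents $a_{l}$ and a remainder $U(s)=\prod_{p}\bigl(1+O(p^{-14s})\bigr)$ holomorphic and uniformly bounded in $\Re(s)\geq\tfrac1{14}+\delta$ for each fixed $\delta>0$. This furnishes the meromorphic continuation of $G$ across all the critical lines occurring in the statement, and it is precisely here that the assumption on the zeros of $\zeta(ls)$, $2\leq l\leq 14$, is used.

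Next I would apply a truncated Perron formula at $c=\tfrac12+\tfrac1{\log x}$ and height $T$, giving
\begin{equation*}
    \sum_{\substack{n\leq x\\ n\ \mathrm{squareful}}}2^{\omega(n)}=\frac{1}{2\pi i}\int_{c-iT}^{c+iT}G(s)\,\frac{x^{s}}{s}\,ds+O\!\left(\frac{x^{1/2+\varepsilon}}{T}\right),
\end{equation*}
where the error accounts for the Dirichlet tail and for the squareful integers within distance $x/T$ of $x$, both controlled by $\#\{n\leq y:\ n\ \mathrm{squareful}\}\ll y^{1/2}$ and $2^{\omega(n)}\ll n^{\varepsilon}$. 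I would then push the contour left to a line $\Re(s)=\beta$ with $\tfrac1{14}<\beta<\tfrac1{13}$, staying inside the domain of holomorphy of $U$, and collect residues. The double poles of $\zeta(2s)^{2}$ at $s=\tfrac12$ and of $\zeta(3s)^{2}$ at $s=\tfrac13$ produce the four main terms $\mathcal{D}_{1}x^{1/2}\log x+\mathcal{D}_{2}x^{1/2}+\mathcal{D}_{3}x^{1/3}\log x+\mathcal{D}_{4}x^{1/3}$. Under the strong Riemann hypothesis each nontrivial zero $\rho_{0}=\tfrac12+i\gamma_{0}$ of $\zeta$ is simple, so $\zeta(4s)$ has simple zeros at $s=\rho_{0}/4$ on $\Re(s)=\tfrac18$, yielding simple poles of $G$; likewise $\zeta(5s)^{-2}$ and $\zeta(6s)^{-3}$ produce poles of orders $2$ and $3$ at $s=\rho_{0}/5$ and $s=\rho_{0}/6$ on $\Re(s)=\tfrac1{10}$ and $\Re(s)=\tfrac1{12}$. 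Since the zero-lines of $\zeta(ls)$ for $7\leq l\leq 13$ lie at $\Re(s)=\tfrac1{2l}\leq\tfrac1{14}<\beta$, those factors contribute no poles to the right of the new contour.

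Evaluating these residues reproduces exactly the secondary sums in the statement: a simple pole at $s=\rho_{0}/4$ gives $\mathcal{D}_{1,\gamma_{1/8}}x^{1/8+i\gamma_{1/8}}$, with residue governed by $1/\zeta'(\rho_{0})$; a double pole at $s=\rho_{0}/5$ gives $\mathcal{D}_{2,\gamma_{1/10}}x^{1/10+i\gamma_{1/10}}\log x+\mathcal{D}_{3,\gamma_{1/10}}x^{1/10+i\gamma_{1/10}}$; and a triple pole at $s=\rho_{0}/6$ gives the three terms weighted by $(\log x)^{2}$, $\log x$ and $1$. It remains to bound the shifted vertical integral on $\Re(s)=\beta$ and the horizontal connectors at $\Im(s)=\pm T$. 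On these segments several factors $\zeta(ls)^{a_{l}}$ have $l\Re(s)<\tfrac12$, so I would invoke the functional equation $\zeta(w)=\chi(w)\zeta(1-w)$ with $|\chi(w)|\asymp|\Im(w)|^{1/2-\Re(w)}$ from \eqref{E1.4} to reflect every such factor into the half-plane $\Re>\tfrac12$, where the conditional bounds $|\zeta(\sigma+it)|^{\pm1}\ll|t|^{\varepsilon}$, $\sigma>\tfrac12$, apply; this produces a polynomial-in-$t$ majorant for $G$ there. Balancing these estimates against the Perron error $O(x^{1/2+\varepsilon}/T)$ forces the choice $T=x^{3/11}$, which turns the error into $O(x^{5/22+\varepsilon})$ and fixes the truncation of the zero-sums, the zeros beyond $|\Im(l\rho)|<x^{3/11}$ contributing within this same error.

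The main obstacle will be this last step: obtaining clean, uniform bounds for the reciprocal factors $\zeta(4s)^{-1}$, $\zeta(5s)^{-2}$, $\zeta(6s)^{-3}$ and for the auxiliary factors $\zeta(ls)^{a_{l}}$, $7\leq l\leq 13$, on a contour lying to the left of their critical lines, and then optimising the two free parameters $\beta$ and $T$ so that neither the left vertical segment nor the horizontal connectors exceed $x^{5/22+\varepsilon}$. The reflection through $\chi$ trades analytic continuation for growth in $t$, and keeping this growth simultaneously under control across all the zeta factors is the delicate point on which the size of the error term ultimately rests.
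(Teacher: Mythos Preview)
The paper does not contain a proof of this statement: Theorem~B is quoted from the authors' earlier article \cite{KVAS} as background, and no argument for it appears here. So there is no ``paper's own proof'' to compare your proposal against directly.

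That said, your outline is the standard and correct strategy, and it parallels exactly what the present paper carries out for its Theorem~\ref{T1} (the unrestricted sum $\sum_{n\le x}2^{\omega(n)}$): identify the Dirichlet series, factor it as a product of Riemann zeta functions times a harmless tail, apply a truncated Perron formula, shift the contour leftward, collect the residues, and bound the remaining integrals using the functional equation \eqref{E1.3}--\eqref{E1.4} together with the conditional Lindel\"of and reciprocal bounds of Lemma~\ref{L3}. Your Euler-product factorisation with leading block $\zeta(2s)^{2}\zeta(3s)^{2}\zeta(4s)^{-1}\zeta(5s)^{-2}\zeta(6s)^{-3}$ is correct and exactly matches the pole structure forced by the secondary sums in the statement (simple, double, and triple poles at the zeros of $\zeta(4s)$, $\zeta(5s)$, $\zeta(6s)$ respectively), and the balance $T=x^{3/11}$ giving Perron error $x^{1/2+\varepsilon}/T=O(x^{5/22+\varepsilon})$ is right.

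One small bookkeeping discrepancy worth flagging: the hypothesis in the statement runs over $l=2,\dots,14$, whereas your factorisation stops at $l=13$ with a tail $U$ convergent for $\Re(s)>\tfrac1{14}$. This suggests that in \cite{KVAS} the zeta factorisation is pushed one step further, extracting $\zeta(14s)^{a_{14}}$ as well and taking the tail convergent for $\Re(s)>\tfrac1{15}$, so that the left edge of the contour can sit strictly below $\tfrac1{14}$. This does not affect the residues you listed or the final exponent $\tfrac{5}{22}$; it only changes which reciprocal zeta factors must be controlled on the shifted contour, and hence which instances of the Riemann hypothesis need to be invoked.
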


In \cite{Ivic2003}, the sum $\displaystyle\sum_{n\leq x}2^{\omega(n)}$ has been studied and it is established that, unconditionally,
\begin{equation*}
      \sum_{n\leq x}2^{\omega(n)}=\mathcal{A}_1x\log x+\mathcal{A}_2x+O\left(x^{\frac{1}{2}}\exp\left\{-C(\log x)^{\frac{3}{5}}(\log \log x)^{-\frac{1}{5}}\right\}\right),
  \end{equation*}
for some real effective constants $\mathcal{A}_1$ and $\mathcal{A}_2$, and $C>0$.

The aim of this paper is to study and establish an asymptotic formula for the above sum under the assumption of the strong Riemann hypothesis and unconditionally too.

Precisely, we prove: 

\begin{theorem}
  Let $x>0$ be large and $\varepsilon>0$. We assume the strong Riemann hypothesis, namely, all the nontrivial zeros of $\zeta(s)$ and $\zeta(2s)$ lie on the respective critical lines, and each such zero is simple. Then, we have 
  \begin{equation*}
      \sum_{n\leq x}2^{\omega(n)}=\mathcal{A}_1x\log x+\mathcal{A}_2x+\sum_{\substack{\zeta(2\rho)=0,\ \Re (\rho)=\frac{1}{4},\\
0<\left| \Im (2\rho)=\gamma_\frac{1}{4}\right|<\frac{x}{10}}}\mathcal{A}_{\gamma_\frac{1}{4}}x^{\frac{1}{4}+i\gamma_\frac{1}{4}}+O\left(x^{\varepsilon}\right),
  \end{equation*}
   where $\mathcal{A}_1=\frac{1}{\zeta(2)}$ and $\mathcal{A}_2=\frac{2\gamma-1}{\zeta(2)}$ and $\mathcal{A}_{\gamma_\frac{1}{4}}$ are some effective complex constants. \label{T1}
\end{theorem}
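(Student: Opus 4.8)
The plan is to recover the sum by contour integration of the generating Dirichlet series $F(s)=\zeta^2(s)/\zeta(2s)$ and to make every pole crossed in the shift explicit. First I would invoke the truncated Perron formula with $c=1+1/\log x$ and a height $T\asymp x$ (fixed below so that the truncation $|\Im(2\rho)|<x/10$ of the zero-sum is the natural one), writing
\begin{equation*}
\sum_{n\le x}2^{\omega(n)}=\frac{1}{2\pi i}\int_{c-iT}^{c+iT}\frac{\zeta^2(s)}{\zeta(2s)}\frac{x^s}{s}\,ds+R.
\end{equation*}
Since $2^{\omega(n)}\ll n^\varepsilon$ and $F(c)\ll(\log x)^2$, the standard estimate for $R$ together with $T\asymp x$ gives $R\ll x^\varepsilon$: the tails $n\le x/2$ and $n\ge2x$ contribute $\ll x^cF(c)/T\ll(\log x)^2$, while the range $x/2<n<2x$ contributes $\ll x^\varepsilon$.

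Next I would push the vertical segment leftwards from $\Re(s)=c$ to the line $\Re(s)=-\tfrac12$, integrating around the rectangle with vertices $c\pm iT$ and $-\tfrac12\pm iT$. Under the strong Riemann hypothesis the only singularities of the integrand inside are the double pole of $\zeta^2(s)$ at $s=1$, the pole of $1/s$ at $s=0$, and the simple poles at the zeros $\rho$ of $\zeta(2s)$, which now all lie on $\Re(s)=\tfrac14$ and are simple; the first trivial zero of $\zeta(2s)$ sits at $s=-1$, to the left of the new line, so it is not crossed. The residue at $s=1$ is read off from $\zeta(s)=(s-1)^{-1}+\gamma+\cdots$ and $1/\zeta(2s)=\zeta(2)^{-1}+\cdots$, and yields the polynomial main term $\mathcal{A}_1x\log x+\mathcal{A}_2x$ with $\mathcal{A}_1=1/\zeta(2)$. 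The residue at $s=0$ equals $F(0)=\zeta(0)=-\tfrac12$, a constant absorbed into $O(x^\varepsilon)$. At each zero $\rho$ the simplicity granted by the strong hypothesis gives the clean residue $\zeta^2(\rho)x^\rho/(2\zeta'(2\rho)\rho)$, with $\zeta^2(\rho)\ne0$ (otherwise $\rho$ would be a zeta zero off the critical line), and summing these over $|\Im(\rho)|<T$ produces precisely the sum over $\rho$ displayed in the theorem.

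It remains to bound the three new sides, and this is the crux. On the horizontal segments $\Im(s)=\pm T$ I would use, under RH, the bounds $\zeta(s),1/\zeta(s)\ll t^\varepsilon$ for $\Re(s)\ge\tfrac12+\delta$, the convexity/functional-equation bound $\zeta(\sigma+it)\ll t^{1/2-\sigma+\varepsilon}$ for $\sigma<\tfrac12$ coming from \eqref{E1.3}--\eqref{E1.4}, and a choice of $T\asymp x$ lying in a gap between consecutive ordinates of zeros of $\zeta(2s)$ so that $1/\zeta(2s)\ll T^\varepsilon$ there; balancing $x^\sigma$ against these factors shows each horizontal contribution is $\ll x^\varepsilon$. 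The decisive point is the far-left line $\Re(s)=-\tfrac12$, where \eqref{E1.3} gives $\zeta^2(s)/\zeta(2s)=\bigl(\chi^2(s)/\chi(2s)\bigr)\,\zeta^2(1-s)/\zeta(1-2s)$ with $\Re(1-s)=\tfrac32$ and $\Re(1-2s)=2$, so the reflected factors are $O(1)$ by absolute convergence, while $|\chi^2(s)/\chi(2s)|\sim|t|^{1/2}$ by \eqref{E1.4}. Hence the integrand is $\ll x^{-1/2}(1+|t|)^{-1/2}$ and
\begin{equation*}
\int_{-1/2-iT}^{-1/2+iT}\frac{\zeta^2(s)}{\zeta(2s)}\frac{x^s}{s}\,ds\ll x^{-1/2}\int_{0}^{T}(1+t)^{-1/2}\,dt\ll x^{-1/2}T^{1/2}\ll 1,
\end{equation*}
using $T\asymp x$, which is well within $O(x^\varepsilon)$. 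I expect the genuine difficulties to be exactly the analytic bookkeeping here: first, securing the uniform estimate $1/\zeta(2s)\ll t^\varepsilon$ on and just to the right of $\Re(2s)=\tfrac12$ and selecting $T$ to avoid the zeros of $\zeta(2s)$ via the usual zero-spacing argument under RH; and second, justifying the interchange of the limit with the conditionally convergent sum over $\rho$, which is what forces the explicit truncation at $|\Im(2\rho)|<x/10$ rather than a fully summed series.
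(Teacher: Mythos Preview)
Your proposal is correct and follows essentially the same route as the paper: Perron's formula on the line $\Re(s)=1+\varepsilon$, shift to $\Re(s)=-\tfrac12$, collect the residues at $s=1$, $s=0$, and the simple poles $\rho$ on $\Re(s)=\tfrac14$, bound the left vertical via the functional equation to get $x^{-1/2}T^{1/2}$, bound the horizontals under RH to $\ll x^{1+\varepsilon}/T$, and take $T\asymp x$. One small remark: there is no ``interchange of limit'' to justify for the zero-sum, since the residue theorem over the finite rectangle directly yields the truncated sum $|\Im(\rho)|<T$; the truncation is automatic, not an analytic subtlety.
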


\begin{theorem}
Let $x>0$ be large and $\varepsilon>0$. Unconditionally, we have
   \begin{equation*}
      \sum_{n\leq x}2^{\omega(n)}=\mathcal{A}_1x\log x+\mathcal{A}_2x+\sum_{\substack{\rho=\beta+i\gamma\\
      0\leq\beta\leq \frac{1}{2}\\0<|\gamma|\leq x^{\frac{21}{29}}}}\mathop{\mathrm{Res}}_{s=\rho} F(s)\frac{x^s}{s}+O\left(x^{\frac{8}{29}+\varepsilon}\right),
  \end{equation*}
  where $\mathcal{A}_1=\frac{1}{\zeta(2)}$ and $\mathcal{A}_2=\frac{2\gamma-1}{\zeta(2)}$.\label{T2}
\end{theorem}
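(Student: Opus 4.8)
The plan is to attack the sum through Perron's formula applied to the Dirichlet series $F(s)=\zeta^{2}(s)/\zeta(2s)$ from \eqref{E1.2}, followed by a contour shift that crosses the pole at $s=1$ and all the poles arising from the nontrivial zeros of $\zeta(2s)$. Fix $c=1+1/\log x$ and $T=x^{21/29}$. Since $f(n)=2^{\omega(n)}\ll n^{\varepsilon}$ and $F(c)\ll(\log x)^{2}$, the truncated Perron formula gives
\begin{equation*}
\sum_{n\le x}2^{\omega(n)}=\frac{1}{2\pi i}\int_{c-iT}^{c+iT}F(s)\frac{x^{s}}{s}\,ds+O\!\left(\frac{x^{1+\varepsilon}}{T}\right),
\end{equation*}
and with $T=x^{21/29}$ the truncation error is already $O\!\left(x^{8/29+\varepsilon}\right)$, the size of the claimed remainder. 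The whole task is therefore to show that shifting the contour introduces nothing larger.

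Next I would move the line of integration to $\Re(s)=\sigma_{0}$ for a fixed $\sigma_{0}\in(-1,0)$, say $\sigma_{0}=-5/58$, closing the rectangle with horizontal segments at $\Im(s)=\pm T$. Inside the rectangle $F(s)x^{s}/s$ has a double pole at $s=1$, a simple pole at $s=0$, and simple poles at the points $s=\varrho/2$ where $\zeta(\varrho)=0$ is a nontrivial zero; these lie in $0<\Re(s)<\tfrac12$ and are exactly the poles indexed by $\rho=\beta+i\gamma$ with $0\le\beta\le\tfrac12$ in the statement. Extracting the residue at the double pole $s=1$ from the Laurent data $\zeta(s)=(s-1)^{-1}+\gamma+\cdots$ and $\zeta(2s)^{-1}=\zeta(2)^{-1}+\cdots$ produces the main terms $\mathcal{A}_{1}x\log x+\mathcal{A}_{2}x$ with $\mathcal{A}_{1}=1/\zeta(2)$; the residue at $s=0$ equals $F(0)=-\tfrac12$, a constant absorbed into $O(x^{\varepsilon})$; and the residues at $s=\varrho/2$ assemble into the displayed sum $\sum_{\rho}\mathop{\mathrm{Res}}_{s=\rho}F(s)x^{s}/s$ over $|\gamma|\le T=x^{21/29}$.

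It then remains to bound the three boundary integrals by $O\!\left(x^{8/29+\varepsilon}\right)$. On the left edge $\Re(s)=\sigma_{0}<0$ the functional equation \eqref{E1.3}--\eqref{E1.4} gives $\zeta(\sigma_{0}+it)\ll|t|^{1/2-\sigma_{0}}$ and, since $\Re(2s)<0$ keeps $\zeta(2s)$ away from its zeros, $1/\zeta(2\sigma_{0}+2it)\ll|t|^{2\sigma_{0}-1/2}$; the $\sigma_{0}$-dependence cancels and $F(\sigma_{0}+it)\ll|t|^{1/2}$, so this edge contributes $\ll x^{\sigma_{0}}\int_{1}^{T}t^{-1/2}\,dt\ll x^{\sigma_{0}}T^{1/2}=x^{8/29}$. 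For the horizontal segments I would first choose $T$ (adjusting it by $O(1)$) so that $\Im(s)=\pm T$ stays a distance $\gg 1/\log T$ from every ordinate of a zero of $\zeta(2s)$; a standard estimate for $\zeta'/\zeta$ then yields $1/\zeta(2s)\ll T^{\varepsilon}$ uniformly on these segments, using the functional equation to reflect the range $0\le\Re(2s)\le\tfrac12$ into $\tfrac12\le\Re(\cdot)\le1$ and the trivial smallness of $1/\zeta(2s)$ for $\Re(2s)<0$. Combining this with the convexity bound $\zeta(\sigma+iT)\ll T^{\mu(\sigma)}$, where $\mu(\sigma)=(1-\sigma)/2$ on $[0,1]$ and $\mu(\sigma)=\tfrac12-\sigma$ for $\sigma<0$, the integrand on $\Im(s)=T$ is $\ll T^{\varepsilon-1}T^{2\mu(\sigma)}x^{\sigma}=x^{(8/29)\sigma+\varepsilon}$ throughout $0\le\sigma\le1$, whose maximum over the segment occurs at $\sigma\approx1$ and equals $x^{8/29+\varepsilon}$.

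The three contributions---the Perron truncation, the vertical edge, and the horizontal edges---are all of size $x^{8/29+\varepsilon}$, and this is exactly the balance forced by the choice $T=x^{21/29}$ together with $\sigma_{0}\le-5/58$; notably the convexity bound suffices, and no subconvexity input is needed. The main obstacle I expect is the uniform control of $1/\zeta(2s)$ on the horizontal segments where $\Re(2s)$ lies in the critical strip $(0,1)$: this is the genuinely delicate point, handled not by a pointwise lower bound for $\zeta$ but by selecting $T$ in a gap between consecutive ordinates of zeros and invoking the standard bound $1/|\zeta|\ll\exp\!\left(c\log T/\log\log T\right)\ll T^{\varepsilon}$ valid for such $T$. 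A secondary point requiring care is the Perron error near $n=x$, where the terms with $|\log(x/n)|$ small must be estimated separately; since $2^{\omega(n)}\ll x^{\varepsilon}$ these contribute only $O(x^{\varepsilon})$ and are harmless.
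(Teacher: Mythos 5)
Your overall architecture (Perron's formula at $T=x^{21/29}$, a shift to the left of the critical strip collecting the residues at $s=1$, $s=0$ and at the zeros of $\zeta(2s)$, then bounding the three edges) is the same as the paper's, and your observation that the convexity bound for $\zeta$ already suffices on the horizontal segments is correct --- the paper invokes Bourgain's exponent $\tfrac{13}{42}$ there, but with this choice of $T$ the bottleneck is the Perron truncation term $x^{1+\varepsilon}/T$ in either treatment. The genuine gap is at the step you yourself flag as delicate: the uniform bound $1/\zeta(2s)\ll T^{\varepsilon}$ on the horizontal segments where $\tfrac12\le\Re(2s)\le 2$. Choosing $T$ so that $2T$ lies at distance $\gg 1/\log T$ from every ordinate of a zero and estimating $\zeta'/\zeta$ by the partial-fraction formula only gives $\log\bigl(1/|\zeta(\sigma+2iT)|\bigr)\ll\log T\cdot\log\log T$, since there are $\asymp\log T$ zeros with ordinate within distance $1$ of $2T$ and each may contribute $\log\log T$; this yields $1/|\zeta|\ll T^{C\log\log T}$, which is super-polynomial in $T$ and destroys the claimed error term. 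The bound $\exp\bigl(c\log T/\log\log T\bigr)$ you invoke is not a standard unconditional consequence of that gap argument (it resembles the RH-conditional upper bound for $|\zeta(\tfrac12+it)|$), and no unconditional pointwise bound of strength $T^{\varepsilon}$ for $1/\zeta$ inside the critical strip is available for a generically well-spaced $T$.

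The paper fills exactly this hole with a nontrivial external input, Lemma \ref{L4} (Ramachandra--Sankaranarayanan): for every large $T$ there is a $T^{*}\in[T,T+T^{1/3}]$ with $\log\zeta(\sigma\pm iT^{*})\ll(\log\log T)^{2}$ uniformly for $\tfrac12\le\sigma\le2$, hence $1/|\zeta(\sigma\pm iT^{*})|\ll T^{\varepsilon}$; one then takes $2T=T^{*}$ and reflects the range $0\le\Re(2s)\le\tfrac12$ through the functional equation, much as you propose. Without this lemma or an equivalent, your horizontal estimates do not close. A secondary, cosmetic issue: unconditionally you cannot assert that the poles at the points $s=\varrho/2$ are simple, since they may be cancelled (partially or wholly) by zeros of $\zeta^{2}(s)$, or $\zeta(2s)$ may have multiple zeros there; because the theorem is stated in terms of residues this does not affect the final formula, but the claim of simplicity should be dropped, as the paper does in Remark \ref{R3}.
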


\begin{remark}\label{R1}
The crucial idea in proving Theorem \ref{T1} is that under the assumption of the strong Riemann hypothesis, the zeros of $\zeta(s)$ and $\zeta(2s)$ lie on the lines $\Re(s)=\frac{1}{2}$ and $\Re(s)=\frac{1}{4}$, respectively. Therefore, there are no cancellations that take place among the zeros of $\zeta(s)$ and $\zeta(2s)$ in $F(s)$, and hence each zero of $\zeta(2s)$ is a simple pole for $F(s)$. All such poles contribute together to the sum appearing in Theorem \ref{T1}. However, unconditionally, i.e., when there is no assumption on the location and nature of the zeros of $\zeta(s)$, there can be common zeros of $\zeta(s)$ and $\zeta(2s)$ in the strip $0\leq \Re(s)\leq\frac{1}{2}$, which consequently may get canceled with each other pertaining to $\frac{\zeta^2(s)}{\zeta(2s)}$. If there exist any leftover poles of $\frac{\zeta^2(s)}{\zeta(2s)}$ after possible cancellations, all such poles will contribute together to the sum appearing in Theorem \ref{T2}, for the precise nature of this sum, see Remark \ref{R3}.
\end{remark}

\begin{remark}\label{R2}
    We note that the main term $\displaystyle S_1:=\sum_{\substack{\zeta(2\rho)=0,\ \Re (\rho)=\frac{1}{4},\\
0<\left| \Im (2\rho)=\gamma_\frac{1}{4}\right|<\frac{x}{10}}}\mathcal{A}_{\gamma_\frac{1}{4}}x^{\frac{1}{4}+i\gamma_\frac{1}{4}}$ in Theorem \ref{T1} satisfies 
\begin{equation*}
    |S_1|\leq x^\frac{1}{4} \Bigg|\sum_{\substack{\zeta(2\rho)=0,\ \Re (\rho)=\frac{1}{4},\\
0<\left| \Im (2\rho)=\gamma_\frac{1}{4}\right|<\frac{x}{10}}}\mathcal{A}_{\gamma_\frac{1}{4}}x^{i\gamma_\frac{1}{4}}\Bigg|.
\end{equation*}
It is also to be noted that 
\begin{equation*}
    |S_1|\leq  \Bigg(\max_{\substack{\zeta(2\rho)=0,\ \Re (\rho)=\frac{1}{4},\\
0<\left| \Im (2\rho)=\gamma_\frac{1}{4}\right|<\frac{x}{10}}}|\mathcal{A}_{\gamma_\frac{1}{4}}|\Bigg) x^\frac{5}{4}\log x
\end{equation*} 
as the number of zeros of $\zeta(s)$ in the region $\{s=\sigma+it:0<\sigma<1,0<t\leq T\}$ is $\ll T\log T$ (see \cite{Ivic2003}). Note that $x\ll x^\frac{5}{4}$. 

Because of the oscillatory factor $x^{i\gamma_\frac{1}{4}}$ in the sum $S_1$, we expect enough cancellations to happen in the sum $S_1$. Thus, we are tempted to make the plausible conjecture that under the assumption of the strong Riemann hypothesis, we may have $S_1=O\left(x^{\frac{1}{4}+\varepsilon}\right)$, but we don't know how to prove this assertion.
\end{remark}
\begin{remark}\label{R3}
    We observe the following about the main term 
    \begin{equation*}
        S_2:=\sum_{\substack{\rho=\beta+i\gamma\\
      0\leq\beta\leq \frac{1}{2}\\0<|\gamma|\leq x^{\frac{21}{29}}}}\mathop{\mathrm{Res}}_{s=\rho} F(s)\frac{x^s}{s}
    \end{equation*} 
    appearing in Theorem \ref{T2}.
    
Here, the sum runs over the zeros $\rho=\beta+i\gamma$ of $\zeta(2s)$, the denominator of $\displaystyle F(s)=\frac{\zeta^2(s)}{\zeta(2s)}$ with $0\leq \beta\leq 1, 0<|\gamma|\leq T$. All such zeros of $\zeta(2s)$ can be zeros of $\zeta^2(s)$, which then cancel each other out in the numerator and denominator, and thus $F(s)$ may be analytic at $s=\rho$. Hence, we make the following two cases.

\textbf{Case 1}: If all the zeros $\rho=\beta+i\gamma$ of $\zeta(2s)$ with $0\leq \beta\leq 1, 0<|\gamma|\leq T$ are also zeros of $\zeta^2(s)$, then there does not exist any pole of $\displaystyle F(s)\frac{x^s}{s}$ in the rectangle $\mathcal{R}$ and the sum $S_2$ turns out to be empty, and thus $S_2=0$.

\textbf{Case 2}: If some (can be all) zeros $\rho=\beta+i\gamma$ of $\zeta(2s)$ with $0\leq \beta\leq \frac{1}{2}, 0<|\gamma|\leq T$ are left over after the cancellations, which consequently becomes the poles of certain orders of $\displaystyle\frac{\zeta^2(s)}{\zeta(2s)}$, hence the sum $S_2$ runs over all such zeros, and these observations imply the following consequences. 
\begin{enumerate}
    \item Suppose all the existing poles (say $\rho=\beta+i\gamma$) after the possible cancellations are simple. Then we have
    \begin{equation*}
        S_2=\sum_{\substack{\zeta(2\rho)=0,\zeta(\rho)\neq0\\0\leq \Re (\rho)=\beta\leq \frac{1}{2}\\0<\left| \Im (\rho)=\gamma\right|<x^{\frac{21}{29}}}}\mathcal{A}_{\beta,\gamma}x^{\beta+i\gamma},
    \end{equation*} 
    where $\displaystyle\mathcal{A}_{\beta,\gamma}x^{\beta+i\gamma}:=\mathop{\mathrm{Res}}_{s=\beta+i\gamma} \frac{\zeta^2(s)}{\zeta(2s)}\frac{x^s}{s}=\frac{x^{\beta+i\gamma}}{\beta+i\gamma}\mathop{\mathrm{Res}}_{s=\beta+i\gamma}\frac{\zeta^2(s)}{\zeta(2s)}$.
    \item Suppose that these poles are of different orders (since there can be poles of order at most $\ll \log x$ in principle in $\mathcal{R}$). Then we have
    \begin{equation*}
    S_2=\sum_{\substack{\zeta(2\rho)=0,\zeta(\rho)\neq0\\0\leq \Re (\rho)=\beta\leq \frac{1}{2}\\0<\left| \Im (\rho)=\gamma\right|<x^{\frac{21}{29}}}}\sum_{1\leq \text{ord}(\beta+i\gamma)\ll \log x}x^{\beta+i\gamma}P_{\text{ord}(\beta+i\gamma)-1}(\log x),
    \end{equation*}
    where $P_n(y)$ is a polynomial in $y$ of degree $n$, $\text{ord}(\beta+i\gamma)$ denotes the order of the pole $\rho=\beta+i\gamma$ of $\displaystyle\frac{\zeta^2(s)}{\zeta(2s)}\frac{x^s}{s}$ after the possible cancellations, and $\displaystyle x^{\beta+i\gamma}P_{\text{ord}(\beta+i\gamma)-1}(\log x) = \mathop{\mathrm{Res}}_{s=\beta+i\gamma} \frac{\zeta^2(s)}{\zeta(2s)}\frac{x^s}{s}$.
\end{enumerate}
\end{remark}

\begin{remark}\label{R4}
    By standard arguments, we find that the error term pertaining to $\displaystyle\sum_{n\leq x}2^{\omega(n)}$ is $\Omega\left(x^{\frac{1}{4}-\varepsilon}\right)$ under strong Riemann hypothesis. The interesting point of Theorem \ref{T2} is that we can take out certain main terms along with an error term $\ll x^{\frac{8}{29}+\varepsilon}$. It is important to note that the exponent $\frac{8}{29}$ is less than the exponent $\frac{131}{416}$ of the error term appearing in the Dirichlet divisor problem.
\end{remark}

\begin{remark}\label{R5}
    In principle, the ideas and arguments of this paper can be taken forward to investigate a more precise formula under the strong Riemann hypothesis for the sum $\displaystyle\sum_{n\leq x}h(n)$, where $h(n)$ is an arithmetic function such that the $L$-function associated to $h(n)$, namely, $\displaystyle H(s)=\sum_{n=1}^\infty\frac{h(n)}{n^s}$ is a quotient of Riemann zeta-functions of different arguments times a harmless Dirichlet series. For instance, under the strong Riemann hypothesis, we can obtain more precise asymptotic formulae for the sums: $\displaystyle \sum_{n\leq x}\frac{\phi(n)}{n},\ \sum_{n\leq x}d(n^2),\ \sum_{n\leq x}d(n)^2,\ \sum_{n\leq x}a(n)$, where $\displaystyle a(n)=\begin{cases}
        1&\text{ if $n$ is squareful}\\
        0&\text{otherwise}
    \end{cases}$. One needs to observe that the $L$-functions associated in these cases respectively, $\displaystyle\frac{\zeta(s)}{\zeta(s+1)},\ \frac{\zeta^3(s)}{\zeta(2s)},\ \frac{\zeta^4(s)}{\zeta(2s)}$ and $\displaystyle\frac{\zeta(2s)\zeta(3s)}{\zeta(6s)}$, which are defined in certain appropriate half-planes. For related works, we refer to \cite{Jia and AS}, \cite{KRAS1}.
\end{remark}

The following lemmas are essential to prove Theorems \ref{T1} and \ref{T2}, so we first establish them.

\section{Lemmas}
\begin{lemma}
    For $\Re(s)>1$, we have
\begin{equation*}
    F(s)=\sum_{n=1}^\infty\frac{2^{\omega(n)}}{n^s}=\frac{\zeta^2(s)}{\zeta(2s)}.
\end{equation*}\label{L1}
\end{lemma}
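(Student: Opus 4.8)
The plan is to establish the identity by passing to the Euler product, exploiting the multiplicativity of $f(n)=2^{\omega(n)}$ recorded in the introduction. First I would secure absolute convergence of the Dirichlet series for $\Re(s)>1$, which is what legitimizes the rearrangement into a product over primes. Writing a general integer as $n=p_1^{a_1}\cdots p_k^{a_k}$, one has $2^{\omega(n)}=2^k=\prod_i 2\leq\prod_i(a_i+1)=d(n)$, since each $a_i\geq 1$. Hence the termwise bound $2^{\omega(n)}\leq d(n)$ gives
\[
\sum_{n=1}^\infty\frac{2^{\omega(n)}}{n^{\sigma}}\leq\sum_{n=1}^\infty\frac{d(n)}{n^{\sigma}}=\zeta^2(\sigma),
\]
which converges for $\sigma=\Re(s)>1$. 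This justifies expanding $F(s)$ as an Euler product.

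Next I would compute the local factor at a fixed prime $p$. Since $p^a$ has exactly one distinct prime divisor for every $a\geq 1$, we have $f(p^a)=2^{\omega(p^a)}=2$ for all $a\geq 1$, while $f(1)=1$. Summing the resulting geometric series yields
\[
\sum_{a=0}^\infty\frac{f(p^a)}{p^{as}}=1+2\sum_{a=1}^\infty p^{-as}=1+\frac{2p^{-s}}{1-p^{-s}}=\frac{1+p^{-s}}{1-p^{-s}}.
\]

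The final step is to recognize this as the $p$-factor of $\zeta^2(s)/\zeta(2s)$. Factoring $1-p^{-2s}=(1-p^{-s})(1+p^{-s})$ gives
\[
\frac{1+p^{-s}}{1-p^{-s}}=\frac{1-p^{-2s}}{(1-p^{-s})^2}=\frac{(1-p^{-s})^{-2}}{(1-p^{-2s})^{-1}},
\]
and taking the product over all primes identifies $F(s)=\prod_p(1-p^{-s})^{-2}\big/\prod_p(1-p^{-2s})^{-1}=\zeta^2(s)/\zeta(2s)$, valid for $\Re(s)>1$ where both Euler products converge absolutely. I anticipate no serious obstacle; the only point genuinely requiring care is the convergence argument in the first step, as it underpins the formal manipulation of the product. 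Everything thereafter is a direct geometric-series computation and algebraic rearrangement.
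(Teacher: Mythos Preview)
Your proof is correct and follows essentially the same Euler-product route as the paper: compute the local factor $1+2p^{-s}+2p^{-2s}+\cdots=(1-p^{-s})^{-2}(1-p^{-2s})$ and then assemble the product into $\zeta^2(s)/\zeta(2s)$. You additionally justify absolute convergence via the bound $2^{\omega(n)}\le d(n)$, which the paper omits, but otherwise the arguments are the same.
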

\begin{proof}
  For each prime $p$ and for $\Re(s)>1$, we observe that $|p^{-s}|<1$. 
  Thus, from the Euler product representation of $F(s)$, we get 
  \begin{align*}
    F(s)=&\prod_p(1+2p^{-s}+2p^{-2s}+2p^{-3s}+2p^{-4s}+\cdots)\\
    =&\prod_p\left(1-p^{-s}\right)^{-2}\left(1-p^{-2s}\right)\\
    =&\frac{\zeta^2(s)}{\zeta(2s)},
\end{align*}
which follows from the Euler product representation of the Riemann zeta function.
\end{proof}

\begin{lemma}
    Assuming the strong Riemann hypothesis, that is, that the nontrivial zeros of $\zeta(s)$ and $\zeta(2s)$ lie on the respective critical lines, we have the following residues of $F(s)\frac{x^s}{s}$ at the respective poles.
    \begin{align*}
   \mathop{\mathrm{Res}}_{s=1} F(s)\frac{x^s}{s}=&\mathcal{A}_1x\log x+\mathcal{A}_2 x,   \\
   \mathop{\mathrm{Res}}_{s=0}\ F(s)\frac{x^s}{s}=&\zeta(0)\\
    \mathop{\mathrm{Res}}_{s=\frac{1}{4}+i\gamma_\frac{1}{4}}\ F(s)\frac{x^s}{s}=&\mathcal{A}_{\gamma_\frac{1}{4}}x^{\frac{1}{4}+i\gamma_\frac{1}{4}}    
   \end{align*}
   where $\frac{1}{4}+i\gamma_\frac{1}{4}$ are the coordinates of the simple zeros of $\zeta(2s)$ on the line $\Re(s)=\frac{1}{4}$ and the $\mathcal{A}_i$'s are real effective, and the $\mathcal{A}_{\gamma_i}$'s are complex effective constants. Note that the residues at the points $s=1$ and $s=0$ hold even unconditionally. \label{L2}
\end{lemma}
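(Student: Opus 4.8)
The plan is to handle each of the three asserted residues separately, since away from the trivial zeros the only singularities of $F(s)\frac{x^s}{s}=\frac{\zeta^2(s)}{\zeta(2s)}\frac{x^s}{s}$ arise from the double pole of $\zeta^2(s)$ at $s=1$, the simple pole of $\frac{1}{s}$ at $s=0$, and the zeros of the denominator $\zeta(2s)$. In each case I will localize the singularity, factor off the holomorphic non-vanishing part, and read off the Laurent coefficient of $(s-s_0)^{-1}$. The first two computations are unconditional; only the third uses the strong Riemann hypothesis.

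For the pole at $s=1$, note that $\zeta(s)=\frac{1}{s-1}+\gamma+O(s-1)$ gives $\zeta^2(s)=\frac{1}{(s-1)^2}+\frac{2\gamma}{s-1}+O(1)$, while $\phi(s):=\frac{x^s}{s\,\zeta(2s)}$ is holomorphic and nonvanishing at $s=1$ because $\zeta(2)\neq 0$. The residue of the product is then the coefficient of $(s-1)^{-1}$, namely $\phi'(1)+2\gamma\,\phi(1)$. Using the logarithmic derivative $\frac{\phi'(s)}{\phi(s)}=\log x-\frac{1}{s}-\frac{2\zeta'(2s)}{\zeta(2s)}$, one finds $\phi(1)=\frac{x}{\zeta(2)}$ and $\phi'(1)=\frac{x}{\zeta(2)}\bigl(\log x-1-\frac{2\zeta'(2)}{\zeta(2)}\bigr)$, so the residue equals $\mathcal{A}_1 x\log x+\mathcal{A}_2 x$ with $\mathcal{A}_1=\frac{1}{\zeta(2)}$ and $\mathcal{A}_2=\frac{1}{\zeta(2)}\bigl(2\gamma-1-\frac{2\zeta'(2)}{\zeta(2)}\bigr)$, both real and effective. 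This step invokes only the pole of $\zeta$ at $s=1$, hence holds unconditionally.

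For the pole at $s=0$, both $\zeta(s)$ and $\zeta(2s)$ are holomorphic with $\zeta(0)=-\frac{1}{2}\neq 0$, so $F(s)$ is holomorphic at $s=0$ with $F(0)=\frac{\zeta^2(0)}{\zeta(0)}=\zeta(0)$; the only singularity of $F(s)\frac{x^s}{s}$ there is the simple pole of $\frac{1}{s}$, whence $\mathop{\mathrm{Res}}_{s=0}F(s)\frac{x^s}{s}=\lim_{s\to 0}s\,F(s)\frac{x^s}{s}=F(0)=\zeta(0)$, again unconditionally. For the poles at $s=\tfrac14+i\gamma_{\frac14}$ I use the strong Riemann hypothesis: these points correspond to $2s=\tfrac12+i\gamma$ being nontrivial zeros of $\zeta$, which under RH lie on $\Re(s)=\tfrac14$ and, being simple, make $\zeta(2s)$ vanish simply there. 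Since all zeros of $\zeta(s)$ lie on $\Re(s)=\tfrac12$, the numerator $\zeta^2(s)$ is nonzero at $\Re(s)=\tfrac14$, so no numerator–denominator cancellation occurs and each such point is a genuine simple pole of $F$. The simple-pole formula then yields $\mathop{\mathrm{Res}}_{s=\rho_0}F(s)\frac{x^s}{s}=\frac{\zeta^2(\rho_0)}{2\zeta'(2\rho_0)}\cdot\frac{x^{\rho_0}}{\rho_0}$ with $\rho_0=\tfrac14+i\gamma_{\frac14}$, which is of the stated shape $\mathcal{A}_{\gamma_{\frac14}}x^{\frac14+i\gamma_{\frac14}}$ with an effective complex constant $\mathcal{A}_{\gamma_{\frac14}}=\frac{\zeta^2(\rho_0)}{2\rho_0\,\zeta'(2\rho_0)}$.

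The manipulations are essentially routine, so I expect no serious analytic obstacle. The only step needing genuine care is the double-pole residue at $s=1$, where one must correctly track the linear Laurent coefficient (this is the source of the $\zeta'(2)/\zeta(2)$ contribution to $\mathcal{A}_2$). The only conceptual point is the non-cancellation argument at $s=\tfrac14+i\gamma_{\frac14}$: it is precisely the separation of the critical lines $\Re(s)=\tfrac12$ and $\Re(s)=\tfrac14$ under RH, together with simplicity of the zeros, that guarantees these are simple poles of $F$ contributing nontrivially, as anticipated in Remark \ref{R1}.
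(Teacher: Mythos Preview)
Your argument is correct and follows essentially the same route as the paper: treat the three pole types separately, expand $\zeta^2(s)$ in its Laurent series at $s=1$, use analyticity of $F$ at $s=0$, and invoke simplicity plus the separation of the critical lines of $\zeta(s)$ and $\zeta(2s)$ under the strong Riemann hypothesis to conclude that each $\rho_0=\tfrac14+i\gamma_{1/4}$ is a genuine simple pole.

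One point worth flagging: your value $\mathcal{A}_2=\frac{1}{\zeta(2)}\bigl(2\gamma-1-\frac{2\zeta'(2)}{\zeta(2)}\bigr)$ differs from the paper's $\mathcal{A}_2=\frac{2\gamma-1}{\zeta(2)}$ stated in the theorems. Your value is in fact the correct one: the paper's residue computation at $s=1$ treats $\frac{1}{\zeta(2s)}$ as the constant $\frac{1}{\zeta(2)}$ rather than expanding it as $\frac{1}{\zeta(2)}-\frac{2\zeta'(2)}{\zeta(2)^2}(s-1)+O((s-1)^2)$, thereby dropping the contribution $-\frac{2\zeta'(2)}{\zeta(2)^2}x$ coming from the interaction of this linear term with the $\frac{1}{(s-1)^2}$ piece of $\zeta^2(s)$. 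Since the lemma as stated only asserts that $\mathcal{A}_2$ is a real effective constant, both arguments establish the lemma; but your more careful bookkeeping with the logarithmic derivative of $\phi(s)=\frac{x^s}{s\,\zeta(2s)}$ yields the right explicit constant.
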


\begin{proof}
    It is well-known that $\zeta(2)=\frac{\pi^2}{6}\neq0$ and the Laurent series expansion of $\zeta(s)$ about $s=1$ is given by
   \begin{equation*}
        \zeta(s)=\frac{1}{s-1}+\sum_{m=0}^\infty\frac{(-1)^m}{m!}\gamma_m(s-1)^m,
    \end{equation*}
    where $\gamma_0,\gamma_1, \gamma_2,\cdots$ are defined as
    \begin{equation*}
       \gamma_m=\lim_{n\rightarrow \infty} \left\{\left(\sum_{k=1}^n\frac{(\log k)^m}{k}\right)-\frac{(\log n)^{m+1}}{m+1}\right\}.
    \end{equation*}
     When $m=0$, $\gamma_0=\gamma$ is the usual Euler-Mascheroni constant known as the Euler constant (see \cite{Ivic2003}).

     Thus, we can write 
     \begin{align*}
          &\mathop{\mathrm{Res}}_{s=1} F(s)\frac{x^s}{s}\\
          &=\mathop{\mathrm{Res}}_{s=1}\frac{\zeta^2(s)}{\zeta(2s)}\frac{x^s}{s}\\
          &=\mathop{\mathrm{Res}}_{s=1}\frac{1}{(1+(s-1))\zeta(2s)}\left(\frac{1}{s-1}+\sum_{m=0}^\infty\frac{(-1)^m}{m!}\gamma_m(s-1)^m\right)^2(xe^{(s-1)\log x})\\
         &=\mathop{\mathrm{Res}}_{s=1}\frac{x}{\zeta(2s)}\left(\frac{1}{(s-1)^2}+\frac{2\gamma}{s-1}+(\gamma^2-2\gamma_1)+(\text{higher degree terms})\right)\times\\
         &\quad\left(1+(s-1)\log x+\frac{((s-1)\log x)^2 }{2!}+\cdots\right)\left(1-(s-1)+(s-1)^2-\cdots\right)\\
         &=\mathop{\mathrm{Res}}_{s=1}\frac{1}{\zeta(2s)}\left\{\frac{x}{(s-1)^2}+\frac{x(\log x+2\gamma-1)}{s-1}+\text{higher degree terms}\right\}\\
         &=\frac{1}{\zeta(2)}(x\log x+(2\gamma-1)x)\\
         &:=\mathcal{A}_1x\log x+\mathcal{A}_2 x,
     \end{align*}
     where $\mathcal{A}_1=\frac{1}{\zeta(2)}$ and $\mathcal{A}_2=\frac{2\gamma-1}{\zeta(2)}$.
     
     Now, for any simple zero $\frac{1}{4}+i\gamma_{\frac{1}{4}}$ of $\zeta(2s)$ on the line $\Re(s)=\frac{1}{4}$, we see that $\zeta^2(s)$ is analytic at $s=\frac{1}{4}+i\gamma_\frac{1}{4}$ and hence $F(s)\frac{x^s}{s}=\frac{\zeta^2(s)}{\zeta(2s)}\frac{x^s}{s}$ has a simple pole at $\frac{1}{4}+i\gamma_\frac{1}{4}$. Therefore,
\begin{align*}
    \mathop{\mathrm{Res}}_{s=\frac{1}{4}+i\gamma_\frac{1}{4}} F(s)\frac{x^s}{s}&=\zeta^2\left(\frac{1}{4}+i\gamma_\frac{1}{4}\right)\frac{x^{\frac{1}{4}+i\gamma_\frac{1}{4}}}{\frac{1}{4}+i\gamma_\frac{1}{4}}\mathop{\mathrm{Res}}_{s=\frac{1}{4}+i\gamma_\frac{1}{4}}\ \frac{1}{\zeta(2s)}\\
    &:=\mathcal{A}_{\gamma_\frac{1}{4}}x^{\frac{1}{4}+i\gamma_\frac{1}{4}}, 
\end{align*}
where $\mathcal{A}_{\gamma_\frac{1}{4}}$ is some complex constant which can be evaluated explicitly.

Note that $\zeta(0)=-\frac{1}{2}\neq0$ and hence $F(s)=\frac{\zeta^2(s)}{\zeta(2s)}$ is analytic at $s=0$. Therefore,
\begin{equation*}
    \mathop{\mathrm{Res}}_{s=0} F(s)\frac{x^s}{s}=\lim_{s\rightarrow0}(s-0)\frac{\zeta^2(s)}{\zeta(2s)}\frac{x^s}{s}=\zeta(0).
\end{equation*}
\end{proof}

\begin{lemma}
The Riemann hypothesis implies that 
    \begin{equation*}
    \zeta(\sigma+it)=O(( |t|+10)^\varepsilon),
\end{equation*}
for $\frac{1}{2}\leq\sigma\leq 1$ and $|t|\geq 10$; and 
 \begin{equation*}
    \frac{1}{\zeta(\sigma+it)}=O( ( |t|+10)^\varepsilon)
\end{equation*}
for $\frac{1}{2}<\sigma\leq 1$ and $|t|\geq 10$.\label{L3}
\end{lemma}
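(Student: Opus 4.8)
By the reflection $\zeta(\overline{s})=\overline{\zeta(s)}$ (so that $|\zeta(\sigma-it)|=|\zeta(\sigma+it)|$) we may assume $t>0$ and write $t$ for $|t|$. The plan is then to deduce both inequalities from a single Littlewood-type estimate for $\log\zeta$. Concretely, it suffices to prove that, under the Riemann hypothesis,
\[
\log|\zeta(\sigma+it)|\ll \frac{\log t}{\log\log t}\qquad\Big(\tfrac12\le\sigma\le 1,\ t\ge t_0\Big),
\]
together with the companion lower bound $\log|\zeta(\sigma+it)|\gg -\dfrac{\log t}{\log\log t}$ valid for $\sigma>\tfrac12$. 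Indeed, exponentiating gives $|\zeta(\sigma+it)|\ll\exp\big(A\log t/\log\log t\big)=t^{A/\log\log t}$ and, likewise, $1/|\zeta(\sigma+it)|\ll t^{A/\log\log t}$; since $A/\log\log t<\varepsilon$ as soon as $t$ is large, both quantities are $O(t^{\varepsilon})$ for large $t$. On the remaining compact range $10\le t\le t_0$, $\tfrac12\le\sigma\le1$, the function $\zeta$ is continuous and (by RH, for the second bound, where we stay off the critical line) non-vanishing, hence $\zeta$ and $1/\zeta$ are $O(1)=O((|t|+10)^{\varepsilon})$ there. Thus everything reduces to the $\log t/\log\log t$ bounds for $\pm\log|\zeta|$.

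To produce these, I would use that, under RH, $\zeta(s)\ne0$ for $\Re(s)>\tfrac12$, so $\log\zeta(s)$ is holomorphic there. Starting from the representation
\[
\log\zeta(\sigma+it)=\log\zeta(2+it)-\int_\sigma^{2}\frac{\zeta'}{\zeta}(u+it)\,du,\qquad \log\zeta(2+it)=O(1),
\]
the problem is transferred to bounding $\zeta'/\zeta$ just to the right of the critical line. For this I would invoke the standard partial-fraction truncation $\frac{\zeta'}{\zeta}(s)=\sum_{|t-\gamma|\le1}\frac{1}{s-\rho}+O(\log t)$ together with the zero-counting estimate $N(t+1)-N(t-1)=O(\log t)$; under RH every zero is $\rho=\tfrac12+i\gamma$, so $|s-\rho|\ge\sigma-\tfrac12$ in each denominator. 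Fed directly into the integral, these ingredients already yield the crude bound $\log|\zeta(\sigma+it)|\ll\log t$.

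The genuinely delicate point, and the step I expect to be the main obstacle, is sharpening $\log t$ to $\log t/\log\log t$, which is what is needed to beat every fixed power $t^{\varepsilon}$. Here I would follow Littlewood's method: rather than estimating $\log\zeta$ at a single point, smooth it over a short segment (or small disk) and combine the Borel--Carath\'eodory theorem with the Hadamard three-circles theorem, interpolating between the bound $\log|\zeta|=O(1)$ on $\Re(s)=2$ and the crude bound $\log|\zeta|=O(\log t)$ near $\Re(s)=\tfrac12$. Taking the radius of the auxiliary disk of order $\log\log t/\log t$ balances the $O(\log t)$ zeros lying within distance one of $t$ against their distance from $s$, and it is precisely this optimisation that manufactures the saving $1/\log\log t$. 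The same argument applied to $-\log|\zeta|$ furnishes the lower bound; the only extra care is the behaviour as $\sigma\downarrow\tfrac12$ in the second assertion, where one must keep track of the separation $|s-\rho|\ge\sigma-\tfrac12$ from the critical line (so that the implied constant there is permitted to depend on $\sigma$). For the full execution of these convexity estimates I would refer to Titchmarsh, \emph{The Theory of the Riemann Zeta-function}, Chapter~XIV.
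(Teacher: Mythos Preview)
Your proposal is correct and follows precisely the classical Littlewood argument (Borel--Carath\'eodory plus Hadamard three circles, optimised with an auxiliary radius of order $\log\log t/\log t$) that is presented in Titchmarsh, Chapter~XIV. The paper's own proof consists solely of the reference ``See \cite{Titchmarsh}'', so you have in fact supplied a sketch of the very argument the paper is citing; there is no methodological difference to discuss.
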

\begin{proof}
    See \cite{Titchmarsh}.
\end{proof}

\begin{lemma}
		For $\frac{1}{2}\leq \sigma\leq 2$, $T$-sufficiently large, there exist a $T^*\in[T,T+T^\frac{1}{3}]$ such that the bound 
		\begin{equation*}
			\log\zeta(\sigma\pm it)\ll (\log\log T^*)^2\ll(\log\log T)^2
		\end{equation*}
		holds uniformly for $\frac{1}{2}\leq\sigma\leq 2$ and thus we have 
		\begin{equation*}
			\mid \zeta(\sigma\pm it)\mid \ll \exp((\log\log T^*)^2)\ll_\varepsilon T^\varepsilon
		\end{equation*}
		on the horizontal line with $t=T^*$ uniformly for $\frac{1}{2}\leq \sigma\leq 2.$\\
        Which implies 
        \begin{equation}
            \frac{1}{|\zeta(\sigma\pm iT^*)|}\ll_\varepsilon T^\varepsilon  \label{E2.1}
        \end{equation}
        uniformly for $\frac{1}{2}\leq \sigma\leq 2.$        \label{L4}
\end{lemma}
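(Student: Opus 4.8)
The plan is to produce the good ordinate $T^*$ by intersecting two ``large'' subsets of the window $[T,T+T^{1/3}]$: one on which $\zeta$ stays uniformly away from its zeros, so that $\zeta'/\zeta$ (and hence the $\sigma$-variation of $\log\zeta$) is controlled, and one on which $\log\zeta$ is itself small in the sense of Selberg's value distribution. Since $\zeta(\overline{s})=\overline{\zeta(s)}$, it is enough to treat the line $t=+T^*$ and deduce the case $t=-T^*$ by conjugation. I record two standard inputs: the Riemann--von Mangoldt bound $N(T+T^{1/3})-N(T)\ll T^{1/3}\log T$ for the number of zeros $\rho=\beta+i\gamma$ with $T\le\gamma\le T+T^{1/3}$, and Selberg's moment estimates for $\log\zeta$, which are available unconditionally (and only become cleaner under the hypotheses of Lemma \ref{L3}).

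First I would carry out the zero-avoidance step. From the partial-fraction expansion
\[
\frac{\zeta'}{\zeta}(\sigma+it)=\sum_{|\gamma-t|\le 1}\frac{1}{\sigma+it-\rho}+O(\log t),
\]
the sum has $O(\log t)$ terms. Deleting from $[T,T+T^{1/3}]$ the intervals $\{\,|t-\gamma|<(\log T)^{-2}\,\}$ about every such ordinate removes a set of measure $\ll T^{1/3}\log T\cdot(\log T)^{-2}=T^{1/3}(\log T)^{-1}=o(T^{1/3})$. For any surviving $t$ one has $|t-\gamma|\gg(\log T)^{-2}$, whence $\zeta'/\zeta(\sigma+it)\ll(\log T)^3$ uniformly for $\tfrac12\le\sigma\le 2$; this controls $\bigl|\tfrac{\partial}{\partial\sigma}\log\zeta(\sigma+it)\bigr|$ and will let me pass from a finite grid in $\sigma$ to all $\sigma\in[\tfrac12,2]$.

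The decisive step uses the distribution of $\log\zeta$. Selberg's moment bounds give, uniformly for $\tfrac12\le\sigma\le 2$,
\[
\int_T^{T+T^{1/3}}\bigl|\log\zeta(\sigma+it)\bigr|^{2k}\,dt\ \ll_k\ T^{1/3}\,(c\,k\log\log T)^{k}.
\]
Fixing a grid of $\sigma$-values of spacing $(\log T)^{-3}$ (so that, by the derivative bound above, $\log\zeta$ varies by $O(1)$ between consecutive grid points on a zero-avoiding line), Chebyshev's inequality bounds the measure of those $t$ at which $|\log\zeta(\sigma_j+it)|>(\log\log T)^2$ by $\ll T^{1/3}(ck\log\log T)^{k}(\log\log T)^{-4k}$. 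Choosing $k\asymp\log\log T$ makes this smaller than any fixed power of $\log T$, so that even after a union bound over the $\ll(\log T)^{O(1)}$ grid points the total exceptional measure is $o(T^{1/3})$. It is worth noting that the threshold $(\log\log T)^2$ is exactly what makes this union bound close with these constants, which is why the power $2$ appears in the statement.

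Intersecting the complement of the zero-neighborhoods with the complement of the exceptional set leaves a set of positive measure, and any $T^*$ in it works: there $|\log\zeta(\sigma_j+iT^*)|\le(\log\log T)^2$ on the grid, and the derivative bound upgrades this to $|\log\zeta(\sigma+iT^*)|\ll(\log\log T)^2$ for all $\tfrac12\le\sigma\le 2$. Taking real parts yields simultaneously $|\zeta(\sigma+iT^*)|=\exp(\Re\log\zeta)\ll\exp((\log\log T)^2)$ and $|\zeta(\sigma+iT^*)|^{-1}=\exp(-\Re\log\zeta)\ll\exp((\log\log T)^2)$; since $(\log\log T)^2\le\varepsilon\log T$ for $T$ large, both are $\ll_\varepsilon T^\varepsilon$, and the line $t=-T^*$ follows by conjugation. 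I expect the genuine difficulty to be the moment input itself: one must justify Selberg's estimates in the short window $[T,T+T^{1/3}]$ and uniformly down to $\sigma=\tfrac12$. It is this short-interval, uniform-in-$\sigma$ control of $\log\zeta$ — rather than the comparatively soft counting and pigeonhole steps — that carries the essential weight of the lemma.
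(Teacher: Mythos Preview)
The paper does not supply its own argument for this lemma; its entire proof reads ``See \cite{KRAS}''. So there is nothing in the paper to match against, and your sketch already contains far more than the paper does.

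Your outline is a correct reconstruction of how such a $T^*$ is produced, and is in the same spirit as the Ramachandra--Sankaranarayanan argument being cited: approximate or control $\log\zeta$ in mean over the short window, use a Chebyshev/pigeonhole step to locate a good ordinate, and pass from a discrete $\sigma$-grid to the full segment via a derivative bound obtained from zero-avoidance. The arithmetic in your Chebyshev step with $k\asymp\log\log T$ checks out (the exceptional measure is $T^{1/3}\exp\bigl(-c\,\log\log T\,\log\log\log T\bigr)$, which beats any fixed power of $\log T$), and taking real parts of $\log\zeta$ gives both $|\zeta|\ll T^\varepsilon$ and $|\zeta|^{-1}\ll T^\varepsilon$ simultaneously; the line $t=-T^*$ via $\zeta(\bar s)=\overline{\zeta(s)}$ is fine.

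Two small points to tighten. First, writing the moment bound as $\ll_k$ is inconsistent with then taking $k\asymp\log\log T$; what you need (and what the Selberg/Tsang framework actually delivers) is $\int_T^{T+H}|\log\zeta(\sigma+it)|^{2k}\,dt\le C\,H\,(Ck\log\log T)^k$ with an \emph{absolute} $C$, for $H=T^{1/3}$. Second, you correctly flag that the substantive input is the short-interval, uniform-in-$\sigma$ moment estimate for $\log\zeta$; that is precisely the content of \cite{KRAS} (obtained there by approximating $\log\zeta$ by a short Dirichlet polynomial and computing its mean values), so deferring to that reference for this step, as the paper does, is appropriate. In short: your route is correct and is essentially the mechanism behind the cited reference, just with the moment/pigeonhole skeleton made explicit.
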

\begin{proof}
    See \cite{KRAS}.
\end{proof}

\begin{lemma}
		For $\frac{1}{2}\leq \sigma\leq 2+\epsilon$ and $|t|\geq 10$, by Phrag{\'e}n-Lindel{\"o}f principle for vertical strips, we have 
      \begin{equation}
      \zeta(\sigma+it)\ll (\mid t\mid+10 )^{\max \{\frac{13}{42}(1-\sigma), 0\}+\epsilon}.\end{equation}
      \label{L5}
	\end{lemma}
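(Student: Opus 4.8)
The plan is to obtain the stated bound as a standard convexity estimate via the Phragm\'en--Lindel\"of principle, interpolating between a subconvexity bound on the critical line $\sigma=\tfrac{1}{2}$ and the near-boundedness of $\zeta(s)$ on the line $\sigma=1$. First I would record the two endpoint bounds. On the line $\sigma=1$ (and for all $\sigma>1$) the Riemann zeta function grows at most logarithmically, so that $\zeta(1+it)\ll \log(|t|+10)\ll (|t|+10)^{\epsilon}$. On the critical line $\sigma=\tfrac{1}{2}$ I would invoke the best available subconvexity bound $\zeta(\tfrac{1}{2}+it)\ll (|t|+10)^{13/84+\epsilon}$. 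The key numerical observation is that $\tfrac{13}{84}=\tfrac{1}{2}\cdot\tfrac{13}{42}$, so that the linear interpolation of the exponent across the strip $\tfrac{1}{2}\le\sigma\le 1$ will precisely produce the claimed quantity $\tfrac{13}{42}(1-\sigma)$.

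Next I would set up the Phragm\'en--Lindel\"of machinery. Since we restrict to $|t|\ge 10$, the unique pole of $\zeta(s)$, at $s=1$, lies outside the region under consideration, so $\zeta(s)$ is analytic in the truncated strip $\tfrac{1}{2}\le\sigma\le 1$, $|t|\ge 10$. Moreover $\zeta$ is of finite order in vertical strips (it has at most polynomial growth there), which is exactly the hypothesis required by the convexity principle. Applying Phragm\'en--Lindel\"of to $\zeta(s)$ with the endpoint exponents $a=\tfrac{13}{84}+\epsilon$ at $\sigma=\tfrac{1}{2}$ and $b=\epsilon$ at $\sigma=1$ yields, for $\tfrac{1}{2}\le\sigma\le 1$,
\[
\zeta(\sigma+it)\ll (|t|+10)^{L(\sigma)+\epsilon},\qquad L(\sigma)=a\cdot\frac{1-\sigma}{1/2}+b\cdot\frac{\sigma-1/2}{1/2},
\]
which, upon letting $b\to 0$, collapses to $L(\sigma)=\tfrac{13}{42}(1-\sigma)$.

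Finally I would dispose of the remaining range $1\le\sigma\le 2+\epsilon$, where the defining Dirichlet series is (essentially) in its region of convergence and the trivial estimate $\zeta(\sigma+it)=O((|t|+10)^{\epsilon})$ holds directly; here the interpolated exponent $\tfrac{13}{42}(1-\sigma)$ is negative, so the sharp bound is the one recorded by the $\max\{\,\cdot\,,0\}$. Combining the two ranges gives the asserted estimate uniformly for $\tfrac{1}{2}\le\sigma\le 2+\epsilon$ and $|t|\ge 10$. I expect the only point needing care---rather than a genuine obstacle---to be the bookkeeping of the $\epsilon$'s together with the verification that the finite-order and analyticity hypotheses of the Phragm\'en--Lindel\"of principle genuinely hold on the truncated strip; once the subconvexity input on $\sigma=\tfrac{1}{2}$ is granted, the convexity interpolation itself is entirely routine.
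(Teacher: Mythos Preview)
Your proposal is correct and follows exactly the intended route: the paper's own ``proof'' is simply a citation to Bourgain, whose result is precisely the critical-line bound $\zeta(\tfrac{1}{2}+it)\ll(|t|+10)^{13/84+\epsilon}$ that you feed into the Phragm\'en--Lindel\"of interpolation. Your write-up in fact supplies the details that the paper leaves implicit in its reference.
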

\begin{proof}
    See \cite{Bourgain}.
\end{proof}
    
\section{Proof of Theorem \ref{T1}}
\begin{proof}
    
We assume the strong Riemann hypothesis for $\zeta(s)$ and $\zeta(2s)$. From Lemma \ref{L1}, for $\Re(s)>1$, we have
\begin{equation*}
    F(s)=\sum_{n=1}^\infty\frac{2^{\omega(n)}}{n^s}=\frac{\zeta^2(s)}{\zeta(2s)}.
\end{equation*} 
By applying Perron's formula to $F(s)$, we get
\begin{equation*}
    \sum_{n\leq x}2^{\omega(n)}=\frac{1}{2\pi i}\int_{1+\varepsilon-iT}^{1+\varepsilon+iT} F(s)\frac{x^s}{s}ds+O\left(\frac{x^{1+\varepsilon}}{T}\right),
\end{equation*}
where $10\leq T\leq x$ is a parameter to be chosen later. 

Now, we move the line of integration to $\Re(s)=-\frac{1}{2}$. Then, in the rectangle $\mathcal{R}$ formed by the vertices $\displaystyle 1+\varepsilon-iT,1+\varepsilon+iT,-\frac{1}{2}+iT,-\frac{1}{2}-iT, 1+\varepsilon-iT$ with straight line segments, $F(s)\frac{x^s}{s}$ possesses the following poles (under the assumption of the strong Riemann hypothesis), one at $s=1$ of order 2, a simple pole at each nontrivial zero $\rho=\frac{1}{4}+i\gamma_\frac{1}{4}$ of $\zeta(2s)$, which lie on the line $\Re(s)=\frac{1}{4}$ and a simple pole at $s=0$. Thus, by Cauchy's residue theorem for rectangles, we get
\begin{align}
     \sum_{n\leq x}2^{\omega(n)}=&\mathcal{A}_1x\log x+\mathcal{A}_2 x+\zeta(0)+\sum_{\rho}\mathop{\mathrm{Res}}_{s=\rho} F(s)\frac{x^s}{s}\nonumber\\
     &+\frac{1}{2\pi i}\left\{\int_{-\frac{1}{2}-iT}^{-\frac{1}{2}+iT}+\int_{-\frac{1}{2}-iT}^{1+\varepsilon-iT}+\int_{-\frac{1}{2}+iT}^{1+\varepsilon+iT}\right\} F(s)\frac{x^s}{s}ds+O\left(\frac{x^{1+\varepsilon}}{T}\right)\nonumber\\
     :=&\mathcal{A}_1x\log x+\mathcal{A}_2 x+\zeta(0)+\sum_\rho \mathop{\mathrm{Res}}_{s=\rho} F(s)\frac{x^s}{s}+I_1+I_2+I_3\nonumber\\
     &\qquad+O\left(\frac{x^{1+\varepsilon}}{T}\right),\label{E3.1}
\end{align}
where the right-hand side sum runs over all the poles of $F(s)\frac{x^s}{s}$ lying inside the rectangle $\mathcal{R}$. 

First, we deal with the vertical line integration $I_1$ as follows:
\begin{align*}
   | I_1|\ll&\int_{-\frac{1}{2}-iT}^{-\frac{1}{2}+iT}\left|\frac{\zeta^2(s)}{\zeta(2s)}\frac{x^s}{s}\right||ds|\\
    \ll&\int_{-\frac{1}{2}-iT}^{-\frac{1}{2}+iT}\frac{|\chi^2(s)\zeta^2(1-s)|}{|\chi(2s)\zeta(1-2s)|}\frac{|x^s|}{|s|}|ds|\\
    \ll&x^{-\frac{1}{2}+\varepsilon}+x^{-\frac{1}{2}}\int_{10}^T\frac{|\chi^2(-\frac{1}{2}+it)\zeta^2(1-(-\frac{1}{2}+it))|}{|\chi(-1+2it)\zeta(1-(-1+2it))|}t^{-1}dt\\
    \ll& x^{-\frac{1}{2}+\varepsilon}+x^{-\frac{1}{2}}\int_{10}^T t^{2\left(\frac{1}{2}-(-\frac{1}{2})\right)-\left(\frac{1}{2}-(-1)\right)-1}dt\\
    \ll&x^{-\frac{1}{2}+\varepsilon}T^{\frac{1}{2}},
\end{align*}
which follows from the functional equation \eqref{E1.3} of $\zeta(s)$, the approximation \eqref{E1.4}. We note here that this estimate holds even unconditionally.

Now, to evaluate the contribution from the horizontal line integrations $I_2$ and $I_3$, we split the integration $\displaystyle\int_{-\frac{1}{2}+iT}^{1+\varepsilon+iT}$ as follows:
\begin{align*}
    |I_2|+|I_3|\ll&\int_{-\frac{1}{2}+iT}^{1+\varepsilon+iT}\frac{|\zeta^2(s)|}{|\zeta(2s)|}\left|\frac{x^s}{s}\right||ds|\\
    \ll&\left\{\int_{-\frac{1}{2}+iT}^{0+iT}+\int_{0+iT}^{\frac{1}{4}+iT}+\int_{\frac{1}{4}+iT}^{\frac{1}{2}+iT}+\int_{\frac{1}{2}+iT}^{1+\varepsilon+iT}\right\}\frac{|\zeta^2(s)|}{|\zeta(2s)|}\left|\frac{x^s}{s}\right||ds|\\
    \ll&\int_{-\frac{1}{2}}^{0}\frac{|\chi^2(\sigma+iT)\zeta^2(1-(\sigma+iT))|}{|\chi(2\sigma+2iT)\zeta(1-(2\sigma+2iT))|}x^\sigma T^{-1}d\sigma\\
    &+\int_{0}^{\frac{1}{4}}\frac{|\chi^2(\sigma+iT)\zeta^2(1-(\sigma+iT))|}{|\chi(2\sigma+2iT)\zeta(1-(2\sigma+2iT))|}x^\sigma T^{-1}d\sigma\\
    &+\int_{\frac{1}{4}}^{\frac{1}{2}}\frac{|\chi^2(\sigma+iT)\zeta^2(1-(\sigma+iT))|}{|\zeta(2\sigma+2iT)|}x^\sigma T^{-1}d\sigma\\
    &+\int_{\frac{1}{2}}^{1+\varepsilon}\frac{|\zeta^2(\sigma+iT)|}{|\zeta(2\sigma+2iT)|}x^\sigma T^{-1}d\sigma\\
    \ll &\int_{-\frac{1}{2}}^{0}x^\sigma T^{2(\frac{1}{2}-\sigma)-(\frac{1}{2}-2\sigma)-1+3\varepsilon}d\sigma+\int_0^\frac{1}{4}x^\sigma T^{2(\frac{1}{2}-\sigma)-(\frac{1}{2}-2\sigma)-1+3\varepsilon}d\sigma\\
    &+\int_{\frac{1}{4}}^{\frac{1}{2}}x^\sigma T^{2(\frac{1}{2}-\sigma)-1+3\varepsilon}d\sigma+\int_{\frac{1}{2}}^{1+\varepsilon}x^\sigma T^{-1+3\varepsilon}d\sigma\\
    \ll&\int_{-\frac{1}{2}}^{0}x^\sigma T^{-\frac{1}{2}+3\varepsilon}d\sigma+\int_0^\frac{1}{4}x^\sigma T^{-\frac{1}{2}+3\varepsilon}d\sigma\\
    &+\int_{\frac{1}{4}}^{\frac{1}{2}}x^\sigma T^{-2\sigma+3\varepsilon}d\sigma+\int_{\frac{1}{2}}^{1+\varepsilon}x^\sigma T^{-1+3\varepsilon}d\sigma\\
    \ll& T^{-\frac{1}{2}+3\varepsilon}+x^{\frac{1}{4}}T^{-\frac{1}{2}+3\varepsilon}+x^\frac{1}{2}T^{-1+3\varepsilon}+x^{1+\varepsilon}T^{-1+3\varepsilon}\\
    \ll& x^{\frac{1}{4}}T^{-\frac{1}{2}+3\varepsilon}+x^{1+\varepsilon}T^{-1+3\varepsilon},
\end{align*}
which follows from the functional equation \eqref{E1.3} of $\zeta(s)$, the approximation \eqref{E1.4} and Lemma \ref{L3}.

Hence, in total, from Lemma \ref{L2} and \eqref{E3.1} we have 
\begin{align*}
      \sum_{n\leq x}2^{\omega(n)}=&\mathcal{A}_1x\log x+\mathcal{A}_2x+\sum_{\substack{\zeta(2\rho)=0,\ \Re (\rho)=\frac{1}{4},\\
0<\left| \Im (2\rho)=\gamma_\frac{1}{4}\right|<T}}\mathcal{A}_{\gamma_\frac{1}{4}}x^{\frac{1}{4}+i\gamma_\frac{1}{4}}+\zeta(0)\\
&+O\left(x^{-\frac{1}{2}+\varepsilon}T^{\frac{1}{2}}\right)+O\left(x^{\frac{1}{4}}T^{-\frac{1}{2}+3\varepsilon}\right)+O\left(x^{1+\varepsilon}T^{-1+3\varepsilon}\right).
  \end{align*}
Finally, making our choice as $T= \frac{x}{10}$, we obtain 
\begin{equation*}
      \sum_{n\leq x}2^{\omega(n)}=\mathcal{A}_1x\log x+\mathcal{A}_2x+\sum_{\substack{\zeta(2\rho)=0,\ \Re (\rho)=\frac{1}{4},\\
0<\left| \Im (2\rho)=\gamma_\frac{1}{4}\right|<\frac{x}{10}}}\mathcal{A}_{\gamma_\frac{1}{4}}x^{\frac{1}{4}+i\gamma_\frac{1}{4}}+O\left(x^{10\varepsilon}\right),
  \end{equation*}
  for some effective real constants $\mathcal{A}_1,\mathcal{A}_2$   and effective complex constants $\mathcal{A}_{\gamma_\frac{1}{4}}$.

This completes the proof of Theorem \ref{T1}.
\end{proof}

\section{Proof of Theorem \ref{T2}}
\begin{proof}
Following the same arguments as in the proof of Theorem \ref{T1}, we get
\begin{align}
     \sum_{n\leq x}2^{\omega(n)}=&\mathcal{A}_1x\log x+\mathcal{A}_2 x+\zeta(0)+\sum_{\rho}\mathop{\mathrm{Res}}_{s=\rho} F(s)\frac{x^s}{s}\nonumber\\
     &+\frac{1}{2\pi i}\left\{\int_{-\frac{1}{2}-iT}^{-\frac{1}{2}+iT}+\int_{-\frac{1}{2}-iT}^{1+\varepsilon-iT}+\int_{-\frac{1}{2}+iT}^{1+\varepsilon+iT}\right\} F(s)\frac{x^s}{s}ds+O\left(\frac{x^{1+\varepsilon}}{T}\right)\nonumber\\
     :=&\mathcal{A}_1x\log x+\mathcal{A}_2 x+\zeta(0)+\sum_\rho \mathop{\mathrm{Res}}_{s=\rho} F(s)\frac{x^s}{s}+I_1+I_2+I_3\nonumber\\
     &\qquad+O\left(\frac{x^{1+\varepsilon}}{T}\right),\label{E4.1}
\end{align}
We note that, in this case, the singularities of $F(s)\frac{x^s}{s}$ inside the rectangle $\mathcal{R}$ formed by the vertices $\displaystyle 1+\varepsilon-iT,1+\varepsilon+iT,-\frac{1}{2}+iT,-\frac{1}{2}-iT, 1+\varepsilon-iT$ with straight line segments are; a pole at $s=1$ of order 2, poles at every zero $\rho=\beta+i\gamma$ of $\zeta(2s)$ with  $0\leq \beta\leq \frac{1}{2}, 0<|\gamma|\leq T$ if any after the possible cancellations and a simple pole at $s=0$. Thus, the sum on the right-hand side of \eqref{E4.1} runs over all the singularities of $F(s)\frac{x^s}{s}$ inside the rectangle $\mathcal{R}$, mentioned above. Here, we make a special choice $T$ such that $2T=T^*$ of Lemma \ref{L4}, which satisfies \eqref{E2.1}. 

From the proof of Theorem \ref{T1}, the vertical line integration contribution is given by
\begin{equation*}
    J_1=\int_{-\frac{1}{2}-iT}^{-\frac{1}{2}+iT}F(s)\frac{x^s}{s}ds\ll x^{-\frac{1}{2}+\varepsilon}T^{\frac{1}{2}}.
\end{equation*}
We note that this contribution is unconditional. 
 
Now, for the contribution from the horizontal lines $J_2$ and $J_3$, we split the integral $\displaystyle\int_{-\frac{1}{2}+iT}^{1+\varepsilon+iT}$ as follows:
\begin{align*}
    &|J_2|+|J_3| \ll\int_{-\frac{1}{2}+iT}^{1+\varepsilon+iT}\frac{|\zeta^2(s)|}{|\zeta(2s)|}\left|\frac{x^s}{s}\right||ds|\\
    \ll&\left\{\int_{-\frac{1}{2}+iT}^{0+iT}+\int_{0+iT}^{\frac{1}{4}+iT}+\int_{\frac{1}{4}+iT}^{\frac{1}{2}+iT}+\int_{\frac{1}{2}+iT}^{1+\varepsilon+iT}\right\}\frac{|\zeta^2(s)|}{|\zeta(2s)|}\left|\frac{x^s}{s}\right||ds|\\
    \ll&\int_{-\frac{1}{2}}^{0}\frac{|\chi^2(\sigma+iT)\zeta^2(1-(\sigma+iT))|}{|\chi(2\sigma+2iT)\zeta(1-(2\sigma+2iT))|}x^\sigma T^{-1}d\sigma\\
    &+\int_{0}^{\frac{1}{4}}\frac{|\chi^2(\sigma+iT)\zeta^2(1-(\sigma+iT))|}{|\chi(2\sigma+2iT)\zeta(1-(2\sigma+2iT))|}x^\sigma T^{-1}d\sigma\\
    &+\int_{\frac{1}{4}}^{\frac{1}{2}}\frac{|\chi^2(\sigma+iT)\zeta^2(1-(\sigma+iT))|}{|\zeta(2\sigma+2iT)|}x^\sigma T^{-1}d\sigma\\
    &+\int_{\frac{1}{2}}^{1+\varepsilon}\frac{|\zeta^2(\sigma+iT)|}{|\zeta(2\sigma+2iT)|}x^\sigma T^{-1}d\sigma\\
    \ll& \int_{-\frac{1}{2}}^{0}x^\sigma T^{2(\frac{1}{2}-\sigma)-(\frac{1}{2}-2\sigma)-1+3\varepsilon}d\sigma+\int_0^\frac{1}{4}x^\sigma T^{2(\frac{1}{2}-\sigma)-(\frac{1}{2}-2\sigma)+2\times\frac{13}{42}(1-(1-\sigma))-1+2\varepsilon}d\sigma\\
    &+\int_{\frac{1}{4}}^{\frac{1}{2}}x^\sigma T^{2(\frac{1}{2}-\sigma)+2\times\frac{13}{42}(1-(1-\sigma))-1+2\varepsilon}d\sigma+\int_{\frac{1}{2}}^{1+\varepsilon}x^\sigma T^{2\times\frac{13}{42}(1-\sigma)-1+2\varepsilon}d\sigma\\
    \ll& \int_{-\frac{1}{2}}^0x^\sigma T^{-\frac{1}{2}+3\varepsilon}d\sigma+\int_{0}^\frac{1}{4}x^\sigma T^{-\frac{1}{2}+\frac{13}{21}\sigma+2\varepsilon}d\sigma+\int_\frac{1}{4}^\frac{1}{2}x^\sigma T^{-\frac{29}{21}\sigma+2\varepsilon}d\sigma\\
    &+\int_{\frac{1}{2}}^{1+\varepsilon}x^\sigma T^{-\frac{8}{21}-\frac{13}{21}\sigma+2\varepsilon}d\sigma\\
    \ll& T^{-\frac{1}{2}+3\varepsilon}+x^{\frac{1}{4}}T^{-\frac{29}{84}+2\varepsilon}+x^\frac{1}{2}T^{-\frac{29}{42}+2\varepsilon}+x^{1+\varepsilon}T^{-1+2\varepsilon},
\end{align*}
which follows from the functional equation \eqref{E1.3} of $\zeta(s)$, the approximation \eqref{E1.4}, Lemmas \ref{L4} and \ref{L5}.

Hence, in total, from Lemma \ref{L2} and \eqref{E4.1} we have 
\begin{align*}
      \sum_{n\leq x}2^{\omega(n)}=&\mathcal{A}_1x\log x+\mathcal{A}_2x+\sum_{\substack{\rho=\beta+i\gamma\\
      0\leq\beta\leq \frac{1}{2}\\0<|\gamma|\leq T}}\mathop{\mathrm{Res}}_{s=\rho} F(s)\frac{x^s}{s}+\zeta(0)+O\left(x^{-\frac{1}{2}+\varepsilon}T^{\frac{1}{2}}\right)\\
&+O\left(x^{\frac{1}{4}}T^{-\frac{29}{84}+2\varepsilon}\right)+O\left(x^{\frac{1}{2}}T^{-\frac{29}{42}+2\varepsilon}\right)+O\left(x^{1+\varepsilon}T^{-1+2\varepsilon}\right).
  \end{align*}
Finally, making our choice as $x^{\frac{1}{4}}T^{-\frac{29}{84}}=x^{\frac{1}{2}}T^{-\frac{29}{42}}$, i.e., $T=x^{\frac{21}{29}}$, we obtain 
\begin{align*}
      \sum_{n\leq x}2^{\omega(n)}=&\mathcal{A}_1x\log x+\mathcal{A}_2x+\sum_{\substack{\rho=\beta+i\gamma\\
      0\leq\beta\leq \frac{1}{2}\\0<|\gamma|\leq x^{\frac{21}{29}}}}\mathop{\mathrm{Res}}_{s=\rho} F(s)\frac{x^s}{s}+O\left(x^{\frac{8}{29}+\varepsilon}\right).
\end{align*}

This completes the proof of Theorem \ref{T2}.
\end{proof}

\noindent {\bf Acknowledgments.} The first author wishes to express his gratitude to the Funding Agency "Ministry of Education, Govt. of India" for the Prime Minister's Research Fellowship (PMRF), ID: 3701831 for its financial support.

\end{document}